\newtheorem{theorem}{Theorem}
\newtheorem{proposition}[theorem]{Proposition}
\newtheorem{lemma}[theorem]{Lemma}
\newtheorem{corollary}[theorem]{Corollary}
\begin{document}

\title{Mosaic number of knots}

\author[H. J. Lee]{Hwa Jeong Lee}
\address{Department of Mathematical Sciences, KAIST, 291 Daehak-ro, Yuseong-gu, Daejeon 305-701, Korea}
\email{hjwith@kaist.ac.kr}
\author[K. Hong]{Kyungpyo Hong}
\address{Department of Mathematics, Korea University, Anam-dong, Sungbuk-ku, Seoul 136-701, Korea}
\email{cguyhbjm@korea.ac.kr}
\author[H. Lee]{Ho Lee}
\address{Department of Mathematical Sciences, KAIST, 291 Daehak-ro, Yuseong-gu, Daejeon 305-701, Korea}
\email{figure8@kaist.ac.kr}
\author[S. Oh]{Seungsang Oh}
\address{Department of Mathematics, Korea University, Anam-dong, Sungbuk-ku, Seoul 136-701, Korea}
\email{seungsang@korea.ac.kr}

\thanks{2010 Mathematics Subject Classification: 57M25, 57M27, 81P15, 81P68}
\thanks{The corresponding author(Seungsang Oh) was supported by Basic Science Research Program through
the National Research Foundation of Korea(NRF) funded by the Ministry of Science,
ICT \& Future Planning(MSIP) (No.~2011-0021795).}
\thanks{This work was supported by the National Research Foundation of Korea(NRF) grant
funded by the Korea government(MEST) (No. 2011-0027989).}

\begin{abstract}
Lomonaco and Kauffman developed knot mosaics to give a definition of a quantum knot system.
This definition is intended to represent an actual physical quantum system.
A knot $n$-mosaic is an $n \times n$ matrix of 11 kinds of specific mosaic tiles representing a knot or a link.
The mosaic number $m(K)$ of a knot $K$ is the smallest integer $n$
for which $K$ is representable as a knot $n$-mosaic.
In this paper we establish an upper bound on the mosaic number of a knot or a link $K$
in terms of the crossing number $c(K)$.
Let $K$ be a nontrivial knot or a non-split link except the Hopf link.
Then $m(K) \leq c(K) + 1$.
Moreover if $K$ is prime and non-alternating except $6^3_3$ link, then $m(K) \leq c(K) - 1$.
\end{abstract}

\maketitle

\section{Knot mosaics} \label{sec:mosaic}
Much of the theory of knots is closely related to quantum physics.
Lomonaco and Kauffman introduced a knot mosaic system to give a definition of a quantum knot system
which can be viewed as a blueprint for the construction of an actual physical quantum system
in the series of papers \cite{K, L, LK1, LK2, LK3, LK4}.
This paper was inspired from open question (8) in \cite{LK2}.

Throughout this paper we will frequently use the term ``knot" to mean either a knot or a link
for simplicity of exposition.
The following $11$ symbols are called {\em mosaic tiles\/}; \\

\begin{figure}[ht]
\includegraphics[scale=0.7]{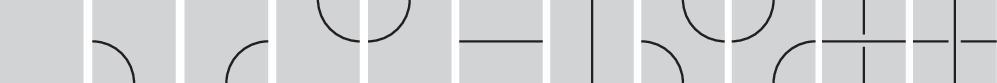}
\label{fig0}
\end{figure}

Let $n$ be a positive integer.
We define an {\em $n$-mosaic\/} as an $n \times n$ matrix $M=(M_{ij})$ of mosaic tiles.
A {\em knot $n$-mosaic\/} is an $n$-mosaic in which each curve segment on a mosaic tile is
suitably connected together on both sides with other curve segments on mosaic tiles immediately next to
in either the same row or the same column,
and whose boundary does not have end-points of curve segments.
Then a knot $n$-mosaic represents a specific knot.
One natural question concerning knot mosaics may be to determine the size of matrices representing knots.
Define the {\em mosaic number\/} $m(K)$ of a knot $K$ as the smallest integer $n$
for which $K$ is representable as a knot $n$-mosaic.
Four examples of mosaics in Figure \ref{fig1} are a 4-mosaic, the Hopf link 4-mosaic,
the trefoil knot 4-mosaic and $6^3_3$ 6-mosaic.

\begin{figure}[h]
\begin{center}
\includegraphics[scale=0.57]{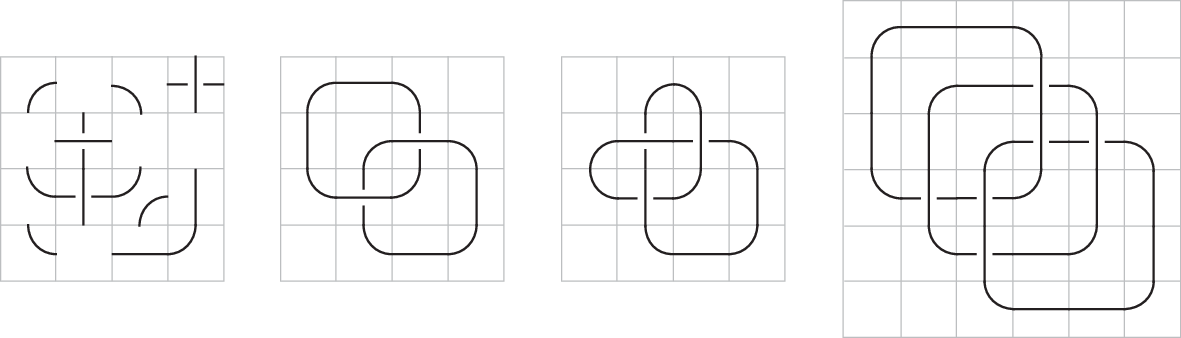}
\end{center}
\caption{Four examples of mosaics}
\label{fig1}
\end{figure}

As an analog to the planar isotopy moves and the Reidemeister moves for standard knot diagrams,
Lomonaco and Kauffman created for knot mosaics the $11$ mosaic planar isotopy moves
and the mosaic Reidemeister moves.
They conjectured that for any two tame knots $K_1$ and $K_2$,
and their arbitrary chosen mosaic representatives $M_1$ and $M_2$, respectively,
$K_1$ and $K_2$ are of the same knot type if and only if
$M_1$ and $M_2$ are of the same knot mosaic type.
This means that tame knot theory and knot mosaic theory are equivalent.
Lomonaco-Kauffman conjecture was proved being true by Kuriya and Shehab \cite{KS}.

Lomonaco and Kauffman also proposed several open questions related to knot mosaics.
One natural question is the following;
{\em Is this mosaic number related to the crossing number of a knot?\/}
In this paper we establish an upper bound on the mosaic number of a knot $K$
in terms of its crossing number $c(K)$.
Note that the mosaic number of the Hopf link is 4,
and the prime and non-alternating $6^3_3$ link is 6,
even though their crossing numbers are 2 and 6, respectively.

\begin{theorem} \label{thm:main}
Let $K$ be a nontrivial knot or a non-split link except the Hopf link.
Then $m(K) \leq c(K) + 1$.
Moreover if $K$ is prime and non-alternating except $6^3_3$ link, then $m(K) \leq c(K) - 1$.
\end{theorem}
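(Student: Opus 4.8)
The plan is to route everything through arc presentations. Recall that the arc index $\alpha(K)$ is the minimal number of arcs in an arc presentation of $K$, equivalently the minimal grid number of a grid (rectangular) diagram of $K$, and recall the known facts that $\alpha(K)\le c(K)+2$ for every non-split link, with equality precisely for alternating links, while a prime non-alternating link satisfies the sharper bound $\alpha(K)\le c(K)$. Granting these, the theorem reduces to the single mosaic-theoretic assertion

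\emph{if $K$ is a non-split link other than the Hopf link and the $6^3_3$ link, then $m(K)\le\alpha(K)-1$,}

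because then $m(K)\le\alpha(K)-1\le c(K)+1$ for every admissible $K$, and $m(K)\le\alpha(K)-1\le c(K)-1$ when $K$ is prime and non-alternating. Thus the whole weight of the argument rests on this inequality.

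To prove it I would first note the easy bound $m(K)\le\alpha(K)$: fix a grid diagram $D$ of $K$ with $\alpha(K)$ horizontal and $\alpha(K)$ vertical segments, overlay an $\alpha(K)\times\alpha(K)$ array of unit cells, and in each cell place the tile of $\mathbb{T}$ that records the behaviour of $D$ there --- a blank tile where $D$ is absent, a straight line tile along a segment, an arc tile at a corner, and a (vertical-over-horizontal) crossing tile where two segments cross. This tile is uniquely determined, two markings never share a cell so corners never collide, and, since a segment cannot protrude past the outermost row or column, every boundary cell is automatically a blank, a boundary-parallel line tile, or a corner, all of which are legal there; hence the array is a genuine knot mosaic.

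The real work is to save one further row (or column), i.e.\ to fit $D$ into an $(\alpha(K)-1)\times(\alpha(K)-1)$ board. For this I would \emph{normalize} $D$: using cyclic permutations of its rows and of its columns, and, if necessary, passing to another grid diagram of $K$ of the same minimal grid number, I would arrange that one designated boundary row carries only a single segment running straight along the edge and that no crossing touches the outermost rows or columns, and then \emph{collapse} that redundant row, reconnecting the two strands it had separated by means of the arc and line tiles of $\mathbb{T}$. \textbf{The main obstacle is exactly this collapse:} one must show, by a finite case analysis on the local pictures at the four corners of the grid diagram and on how the outermost segments interleave, that it can always be performed without creating a cell containing two crossings and without leaving a dangling connection point on the new boundary --- and that there is room for it, an $n$-mosaic having only $(n-2)^2$ interior cells and hence at most that many crossing tiles. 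It is precisely this normalization-and-collapse that cannot be carried out for the two exceptions (nor for the unknot, which the theorem already excludes): the Hopf link has $m=4>3=c+1$ and the $6^3_3$ link has $m=6>5=c-1$, so they are genuine exceptions and must be, and are, checked by hand. Combining the displayed inequality with the two arc-index bounds as indicated above then completes both parts of the theorem.
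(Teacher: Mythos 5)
Your overall architecture matches the paper's: convert a minimal grid diagram into an $\alpha(K)$-mosaic, shrink it by one, and then invoke the Bae--Park bound $\alpha(K)\le c(K)+2$ and the Jin--Park bound $\alpha(K)\le c(K)$ for prime non-alternating links. However, the step you yourself flag as ``the main obstacle'' --- showing that the mosaic can always be reduced from size $\alpha(K)$ to size $\alpha(K)-1$ --- is exactly where the entire content of the proof lives, and you leave it as an unexecuted ``finite case analysis.'' The paper does this via a concrete geometric move: after cyclic permutations it places a horizontal edge $h_t$ of a \emph{non-rectangular} component on top, identifies the shorter adjacent vertical edge $v_s$, permutes the vertical edges so that $v_s$ sits between the other two relevant vertical edges, slides $h_t$ down to the horizontal edge $h_s$ while keeping it over all vertical strands, and then deletes the column containing $v_s$. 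Asserting that a ``normalize and collapse'' can always be performed, without exhibiting the move, is a genuine gap rather than a compressed proof.

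More seriously, your single unified claim $m(K)\le\alpha(K)-1$ hides a case in which the slide-and-delete collapse cannot even be set up: when every component of the minimal grid diagram is a rectangle, the two vertical edges adjacent to any top horizontal edge have equal length, there is no ``shorter'' edge to absorb, and $K$ is a non-split link of $n$ unknots with $\alpha(K)=2n$. Here the paper abandons the arc-index route entirely and argues directly against $c(K)$, using the Jin--Kim--Ko lemma ($p\ge 2(n-r)$, with the boundary case forcing a connected sum of Hopf links) together with explicit mosaic constructions (the ``necklace'' diagrams of Figure~\ref{fig6}) to handle $c(K)=2n-2,\,2n-1,\,2n$ separately; this is also where the Hopf link and $6^3_3$ emerge as the genuine exceptions, rather than being isolated failures of a collapse move as you suggest. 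Your proposal does not anticipate this dichotomy, so even granting your normalization in the generic case, the argument as written does not cover all links in the statement.
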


Another proposed question is related to the enumeration of knot mosaics.
Let $D_n$ denote the total number of knot $n$-mosaics.
Already known is that $D_1=1$, $D_2=2$ and $D_3=22$.
Recently the authors established lower and upper bounds \cite{HLLO1};
$$\frac{2}{275}(9 \cdot 6^{n-2} + 1)^2 \cdot 2^{(n-3)^2} \leq D_n
\leq \frac{2}{275}(9 \cdot 6^{n-2} + 1)^2 \cdot (4.4)^{(n-3)^2}.$$
The authors also presented the exact number of $D_n$ for small $n=4,5,6$ \cite{HLLO2}, 
the state matrix recursion algorithm producing the exact enumeration of general $D_n$
that uses recursion formula of state matrices \cite{OHLL},
and more precise bounds of the quadratic exponential growth ratio of $D_n$ \cite{Oh}.

\section{Arc index and grid diagrams}

There is an open-book decomposition of $\mathbb{R}^3$
which has open half-planes as pages and the standard $z$-axis as the binding axis.
We may regard each page as a half-plane $H_{\theta}$ at angle $\theta$
when the $x$-$y$ plane has a polar coordinate.
It can be easily shown that every knot $K$ can be embedded in an open-book decomposition
with finitely many pages so that it meets each page in a simple arc.
Such an embedding is called an {\em arc presentation\/} of $K$.
The {\em arc index\/} $\alpha(K)$ is defined to be the minimal number of pages
among all possible arc presentations of $K$.

We introduce two theorems which are crucial in the proof of the main theorem.
Bae and Park established an upper bound on arc index in terms of crossing number.
Corollary $4$ and Theorem $9$ in \cite{BP} provide the following;

\begin{theorem} {\textup{(Bae-Park)}} \label{thm:BP}
Let $K$ be a knot or a non-split link.
Then $\alpha(K) \leq c(K)+2$.
Moreover if $K$ is prime and non-alternating, then $\alpha(K) \leq c(K)+1$.
\end{theorem}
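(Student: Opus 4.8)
The plan is to produce an explicit arc presentation from a minimal crossing diagram and then count its arcs, using the standard equivalence between arc presentations and grid (rectangular) diagrams. First I would fix a diagram $D$ of $K$ realizing the minimal crossing number $c(K)$. Since $K$ is a knot or a non-split link, $D$ may be taken connected, and since it is a minimal diagram it has no nugatory crossings; hence, viewed as a $4$-valent graph on $S^2$, it has $V=c(K)$ vertices and $E=2c(K)$ edges, so by Euler's formula $V-E+F=2$ its complement consists of exactly $F=c(K)+2$ regions. This number $c(K)+2$ is precisely the target bound, which strongly suggests organizing the whole construction around the regions of $D$ rather than around a naive routing.

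Next I would recall the standard fact (due to Cromwell) that the arc index $\alpha(K)$ equals the \emph{grid number} $g(K)$, the least $n$ for which $K$ is presented by an $n\times n$ grid diagram: a union of horizontal and vertical segments, with distinct rows and columns, and with the convention that at each crossing the vertical segment lies over the horizontal one. Thus it suffices to build a grid diagram of $K$ with at most $c(K)+2$ vertical segments. I would do this by isotoping $D$ into rectilinear position, replacing each edge by horizontal and vertical pieces and making every crossing a transverse vertical-over-horizontal crossing, and then arranging the turning corners so that the number of maximal vertical segments is governed by the regions of $D$. The heart of the argument is the combinatorial bookkeeping showing that this controlled rectilinear realization uses at most $F=c(K)+2$ vertical lines. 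This is the step I expect to be the main obstacle, since it is exactly where the sharp constant $c(K)+2$ must be extracted rather than a weaker linear bound such as $2c(K)$, and it is what forces the use of the region count.

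For the improvement when $K$ is prime and non-alternating, I would exploit a local feature of such diagrams. A minimal non-alternating diagram contains a spot where the knot runs along two consecutive overcrossings, or two consecutive undercrossings, without an intervening switch; at such a spot a local modification of the rectilinear realization allows two vertical segments to be amalgamated into one, eliminating a single grid line and hence one arc, which would give $\alpha(K)\le c(K)+1$. The role of primeness is to guarantee that this amalgamation can be carried out globally without the diagram splitting or forcing a compensating increase elsewhere, so that the net saving of one arc is genuine. Verifying that the saving is realizable, and that both non-alternation and primeness are needed for it, is the secondary difficulty; the generic $c(K)+2$ bound and the single-arc reduction are logically independent steps, and the theorem follows by combining them with Cromwell's identification $\alpha(K)=g(K)$.
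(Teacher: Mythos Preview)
The paper does not prove this theorem: it is quoted from \cite{BP} (Corollary~4 and Theorem~9 there) and invoked as a black box in the proof of Theorem~\ref{thm:main}. There is therefore no proof in the present paper against which to compare your proposal.

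As for the proposal on its own terms, the overall strategy is in the spirit of the original Bae--Park argument: the target $c(K)+2$ is indeed the number of complementary regions of a minimal connected diagram, by the Euler count you give, and in \cite{BP} the arcs of the resulting arc presentation are put in bijection with those regions. However, the mechanism is not a rectilinear isotopy followed by counting vertical segments as you describe. Your plan leaves the decisive step --- converting the region count into a bound on the number of grid columns --- as unexplained ``combinatorial bookkeeping'', and a generic rectilinear straightening of a $c$-crossing diagram only yields a bound of order $2c(K)$, not $c(K)+2$. The Bae--Park device is to choose a suitable linear ordering of the $c(K)+2$ regions and use the region labels themselves as the heights on the binding axis, reading off the two endpoints of each arc from the regions incident at a crossing; that labelling trick is exactly what your sketch gestures at but does not supply. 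Your guess for the non-alternating refinement --- saving one arc at a pair of consecutive over- (or under-) crossings, with primeness ensuring the saving is global --- is essentially what happens in \cite{BP}.
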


Later Jin and Park improved the second part of the above theorem as Theorem $3.3$ in \cite{JP}.

\begin{theorem} {\textup{(Jin-Park)}} \label{thm:JP}
Let $K$ be a non-alternating prime knot or link.
Then $\alpha(K) \leq c(K)$.
\end{theorem}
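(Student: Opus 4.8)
The plan is to work throughout with rectangular (grid) diagrams, using the standard fact that the arc index $\alpha(K)$ equals the minimal number of vertical edges among all rectangular diagrams of $K$ — a rectangular diagram being a diagram built from horizontal and vertical segments in which every crossing has its vertical segment as the overpass, with no two horizontal (resp.\ vertical) segments collinear. It then suffices to exhibit a rectangular diagram of $K$ with at most $c(K)$ vertical edges. The starting point is the prime non-alternating case of the Bae--Park bound (Theorem~\ref{thm:BP}): revisiting the \emph{construction} behind that inequality, from a reduced minimal-crossing diagram $D$ of $K$ one obtains a rectangular diagram $R$ with at most $c(K)+1$ vertical edges, which we may assume has exactly $c(K)+1$ (otherwise the claim already holds). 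The entire task reduces to deleting one further vertical edge from $R$, and since the only hypothesis still unused is non-alternation, the argument must extract a second, sharper consequence of it than Bae--Park already did.

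First I would record the local structure forced by non-alternation. Because $K$ is non-alternating, no minimal diagram $D$ alternates, so traversing the knot one necessarily meets two consecutive crossings of the same type: a strand that runs as an overpass (dually, an underpass) through two successive crossings without changing level in between. I would then trace what this ``length-two overpass'' becomes under the conversion $D \mapsto R$. The expectation is that it produces two adjacent or collinear vertical edges, one of which occupies an $\mathsf{L}$-shaped corner of the grid whose row and column carry no other strand.

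Next I would perform a destabilization at that corner. Destabilization is the standard grid move that collapses such an $\mathsf{L}$-corner and removes one vertical edge while preserving the link type, its legality being precisely the condition that the relevant row and column be otherwise empty. Executing it turns $R$ into a rectangular diagram $R'$ of the same link with $c(K)$ vertical edges, giving $\alpha(K) \le c(K)$. Here primeness is what lets us take $D$ reduced and connected, so that the conversion introduces no nugatory crossings or spurious edges and the corner manufactured by the non-alternating site is genuinely removable, rather than being an artifact of a connect-sum decomposition.

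The main obstacle is the interaction between the second and third steps: one must show that \emph{every} non-alternating prime $K$ admits a minimal diagram whose rectangular presentation places a non-alternating site in a destabilizable corner, and that the destabilization truly preserves the link — that is, that the chosen edge's row and column are clear. Ruling out the possibility that the flattening always buries the non-alternating site among other strands, and carrying out the edge/crossing bookkeeping uniformly for links as well as knots, is where the real work lies; it may well require choosing the minimal diagram and its flattening to a grid \emph{simultaneously}, so that the length-two overpass is placed at an outermost corner by design rather than by luck.
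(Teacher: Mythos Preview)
The paper does not prove Theorem~\ref{thm:JP}; it is quoted as an external result (Theorem~3.3 of \cite{JP}) and used as a black box in the proof of Theorem~\ref{thm:main}. There is therefore no proof in the paper to compare your proposal against.

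That said, your sketch is not a proof but a plan with openly acknowledged gaps. The strategy --- start from the Bae--Park rectangular diagram on $c(K)+1$ edges and use a non-alternating site to locate a destabilizable corner --- is indeed the spirit of the Jin--Park argument, but the step where you write ``The expectation is that it produces two adjacent or collinear vertical edges, one of which occupies an $\mathsf{L}$-shaped corner of the grid whose row and column carry no other strand'' is precisely the content of the theorem, not something that falls out automatically. In the actual Jin--Park proof this requires a careful analysis of the Bae--Park wheel/spoke construction and a case-by-case verification that a non-alternating edge can be positioned so that a specific Cromwell-type destabilization applies; your proposal correctly identifies this as ``where the real work lies'' but does not supply it. As written, the proposal is a reasonable outline of the known proof rather than an independent argument, and it would not stand as a proof without filling in exactly the obstacle you flag in your final paragraph.
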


A {\em grid diagram\/} is a link diagram of vertical strands and the same number of horizontal strands
with the properties that at every crossing the vertical strand crosses over the horizontal strand
and no two horizontal segments are co-linear and no two vertical segments are co-linear.
It is known that every knot admits a grid diagram \cite{C}.
The minimal number of vertical segments in all grid diagrams of a knot $K$ is called
the {\em grid index\/} of $K$, denoted by $g(K)$.
Since grid diagrams are a way for depicting arc presentations \cite{C},
we will think of the grid index and the arc index equivalently, i.e. $\alpha(K)=g(K)$.
Three figures in Figure \ref{fig2} show an arc presentation of the trefoil knot,
a grid diagram, and how they are related.
Note that both of the arc index and the grid index of the trefoil knot are 5.

\begin{figure} [h]
\begin{center}
\includegraphics[scale=0.8]{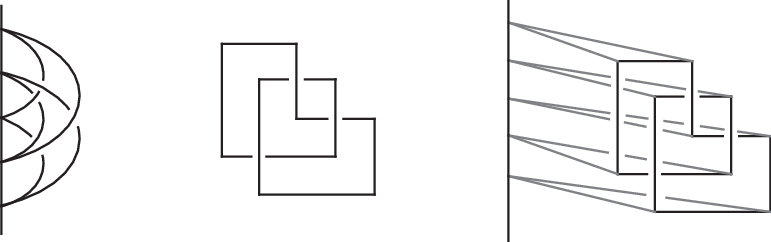}
\end{center}
\caption{An arc presentation and a grid diagram of the trefoil knot}
\label{fig2}
\end{figure}

Dynnikov introduced the following properties of grid diagram system in \cite{D}.
In this paper we use cyclic permutations only.

\begin{proposition} {\textup{(Dynnikov)}} \label{prop:D}
Two grid diagrams of the same link can be obtained from each other by a finite sequence of
the following elementary moves.
\begin{itemize}
\item cyclic permutation of horizontal (vertical) edges;
\item stabilization and destabilization;
\item interchanging neighbouring edges if their pairs of endpoints do not interleave.
\end{itemize}
\end{proposition}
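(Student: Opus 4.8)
The plan is to prove this as a Reidemeister-type theorem for grid diagrams, splitting the argument into the routine claim that each elementary move preserves the link type and the substantive converse that these moves suffice. For the easy direction I would read off directly from the associated planar diagram that each move is realized by an ambient isotopy of $S^3$: a cyclic permutation of the horizontal (or vertical) edges is realized by an ambient isotopy, becoming a translation once the grid is viewed on the torus (equivalently, rotating the pages in the arc-presentation picture); the commutation move interchanges two neighbouring parallel strands whose pairs of endpoints do not interleave and hence introduces no crossing; and a (de)stabilization merely refines the diagram near a corner, adding one row and one column without changing the number of crossings, so it too is a planar isotopy. Thus all three moves are link invariant, and the assignment from grid diagrams modulo these moves to link types is well defined.

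For the converse, let $G_0$ and $G_1$ be grid diagrams of the same link $L$, and let $D_0$ and $D_1$ be the planar diagrams they determine. By Reidemeister's theorem $D_0$ and $D_1$ are connected by a finite sequence of planar isotopies and Reidemeister moves of types I, II and III, so it suffices to realize each step of this sequence by a finite sequence of the three grid moves, possibly after first stabilizing to refine the grid. I would first dispose of planar isotopy: two grid diagrams presenting the same combinatorial diagram differ only by a reordering of the rows and columns carrying their segments, and any such reordering that does not alter the diagram decomposes into cyclic permutations together with commutations of adjacent non-interleaving edges. The heart of the argument is then to give, for each Reidemeister move, an explicit local grid realization: one stabilizes near the relevant strands to create empty rows and columns, uses commutations to slide the strands into the prescribed local configuration, performs the move as a short fixed sequence of elementary moves inside that block, and finally destabilizes to return to a grid of the original size away from the move.

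The main obstacle is exactly this last translation, and in particular the Reidemeister III move. Because the grid is rigid, a strand can be pushed only in axis-parallel jumps, and the convention that every vertical strand crosses over the horizontal one must be respected at each intermediate stage; pushing one strand across a crossing therefore requires a carefully choreographed sequence of stabilizations and commutations, and one must verify that the over/under and sign data are preserved throughout and that the entire manipulation is confined to the chosen block so that the rest of the diagram is untouched. This confinement is precisely what cyclic permutation and commutation supply. Once the local realizations of the type I, II and III moves are established, composing them along the Reidemeister sequence connecting $D_0$ to $D_1$ produces a finite sequence of elementary moves carrying $G_0$ to $G_1$, which completes the proof. The whole scheme can equivalently be phrased in the language of arc presentations through the bijection with grid diagrams, under which the three moves correspond to page rotation, page addition or removal, and the exchange of unlinked pages.
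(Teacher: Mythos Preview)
The paper does not prove this proposition at all: it is stated as a result of Dynnikov and attributed to \cite{D}, with no argument given. There is therefore no ``paper's own proof'' to compare against; the proposition functions here purely as a quoted tool (and in fact only the cyclic-permutation move is used later).

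Your outline is the standard route to such a result (realize each Reidemeister move by a finite combination of grid moves after suitable stabilization) and is correct in spirit, but note two small inaccuracies in the easy direction. First, a stabilization does \emph{not} in general leave the number of crossings unchanged: depending on the corner type it may introduce a single new crossing, which is then removed by a Reidemeister~I move, so the correct statement is that stabilization is realized by a planar isotopy \emph{or} a Reidemeister~I move, not always a planar isotopy. Second, your treatment of planar isotopy between grids (``any such reordering \ldots decomposes into cyclic permutations together with commutations'') is too quick: two grid diagrams with the same underlying planar diagram need not differ merely by a permutation of rows and columns, since the lengths and relative positions of segments can change, and handling this typically already requires (de)stabilizations. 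These are repairable, but as written the sketch would not pass as a proof; the substantive case analysis for Reidemeister~III that you flag as the main obstacle is indeed where the real work lies, and you have not carried it out.
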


\begin{figure}[h]
\begin{center}
\includegraphics[scale=0.55]{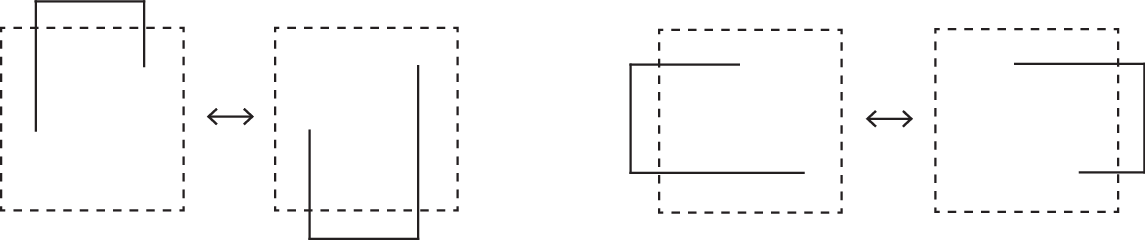}
\end{center}
\caption{Cyclic permutations on a grid diagram}
\label{fig3}
\end{figure}

\section{Upper bound on the mosaic number}

In this section we will prove Theorem \ref{thm:main}.
Let $K$ be a nontrivial knot or a non-split link except the Hopf link and $6^3_3$ link.
We start with a grid diagram with the grid index $g(K)$ (which is equal to $\alpha(K)$).
We can regard this grid diagram as a knot mosaic representative of $K$ by smoothing each corner
as in Figure \ref{fig4}.
This fact guarantees that $m(K) \leq \alpha(K)$.
We distinguishes into two cases;
\vspace{2mm}

\begin{figure}[h]
\begin{center}
\includegraphics[scale=0.8]{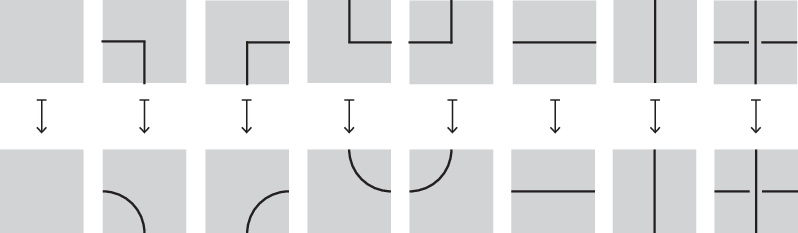}
\end{center}
\caption{From a grid diagram to a knot mosaic representative}
\label{fig4}
\end{figure}

\noindent {\bf Case 1.} The grid diagram contains a non-rectangular component.

We will find an upper bound on the mosaic number in terms of the arc index with the size reduced by 1.
By repeating cyclic permutations of horizontal edges properly,
we may assume that the horizontal edge, say $h_t$, on the top of this new grid diagram
is a part of the non-rectangular component.
Note that cyclic permutations do not change the grid index.
Let $v_l$ and $v_s$ be two vertical edges adjacent with $h_t$ where $v_l$ is longer than $v_s$.
Indeed they do not have the same length, otherwise they must be parts of a rectangular component.
Let $h_s$ be the horizontal edge adjacent with the shorter vertical edge $v_s$ other than $h_t$,
and $v_m$ the vertical edge adjacent with $h_s$ other than $v_s$.
See Figure \ref{fig5}.
By repeating cyclic permutations of vertical edges properly,
we can find a grid diagram of $K$ such that $v_s$ lies between $v_l$ and $v_m$.

\begin{figure}[h]
\begin{center}
\includegraphics[scale=0.66]{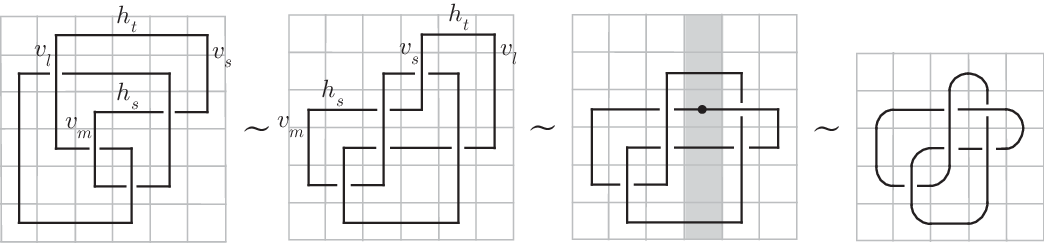}
\end{center}
\caption{Reduction of the size by 1}
\label{fig5}
\end{figure}

Now we slide down $h_t$ until it reaches to $h_s$ on the grid diagram,
keeping that $h_t$ crosses over all vertical edges.
Since this sliding uses a combination of planar isotopy moves and Reidemeister moves,
it does not change the knot type.
Even though it is not a grid diagram anymore, it is still a mosaic representative of $K$.
Finally we can delete one column of mosaic tiles which contains the vertical edge $v_s$
as the shaded region in the figure.

The result is an $(\alpha(K)-1)$-mosaic representative of $K$.
Thus we get $m(K) \leq c(K) + 1$, and moreover $m(K) \leq c(K) - 1$ for a non-alternating prime $K$
by Theorem \ref{thm:BP} and Theorem \ref{thm:JP}.
\vspace{2mm}

\noindent {\bf Case 2.} All components of the grid diagram are rectangles.

This means that $K$ is a link consisting of $n$ trivial knots for some positive integer $n$.
It is easy to see the inequality $m(K) \leq 2n$.
Jin, Kim and Ko proved the following lemma in Lemma 1 and 7 in \cite{JKK} which is very useful.

\begin{lemma} {\textup{(Jin-Kim-Ko)}} \label{lem:link}
Let $K_D$ be a link diagram with $p$ crossings, $n$ components and $r$ split components.
Then $p \geq 2(n-r)$.
Furthermore if $K_D$ is a diagram with $2n-2$ crossings of a non-split link,
then $K_D$ is a connected sum of $n-1$ Hopf links.
\end{lemma}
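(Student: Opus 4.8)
The plan is to prove the two assertions of Lemma~\ref{lem:link} separately: the inequality $p\geq 2(n-r)$ by a parity count of intersections, and the rigidity statement by analysing when equality is attained and then inducting on the number of components.

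For the inequality, it suffices to treat a single split component, that is, a connected sub-diagram $D_0$ with $k$ link components $C_1,\dots,C_k$, and then add the resulting estimates over the $r$ split components (whose component counts sum to $n$). Since distinct link components meet only at transverse double points and each $C_a$ is a null-homologous closed curve in the plane, a standard parity argument shows that the number $|C_a\cap C_b|$ of crossings between two \emph{distinct} components is always even. Because $D_0$ is connected, the component adjacency graph (one vertex per $C_i$, an edge whenever two components cross) is connected, so we may order the components so that for every $i\geq 2$ the curve $C_i$ meets $C_1\cup\cdots\cup C_{i-1}$ in at least one point, hence in at least two. Summing, $D_0$ has at least $\sum_{i\geq 2}|C_i\cap(C_1\cup\cdots\cup C_{i-1})|\geq 2(k-1)$ crossings, and therefore $p\geq 2(n-r)$.

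For the second statement, first note that a non-split link admits no diagram with two or more split components, since distinct split components lie in disjoint disks of the plane and would exhibit the link as split; hence $r=1$ and we are exactly in the equality case $p=2(n-1)$. Tracking the slack in the argument above, equality forces: (i) $K_D$ has no self-crossings, so each component is an embedded circle; and (ii) for each $i\geq 2$ there is a unique earlier $C_j$ with $|C_i\cap C_j|=2$, and $C_i$ is disjoint from all the others. Thus the component adjacency graph is a tree $T$ on $n$ vertices, every edge of $T$ corresponds to precisely two crossings, and no other crossings occur. I would now induct on $n$. The base cases are immediate ($n=1$: a single embedded circle; $n=2$: a non-split two-component diagram with two crossings, forced to be the Hopf link). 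For $n\geq 3$, choose a leaf $C_k$ of $T$ with unique neighbour $C_j$; the key geometric claim is that there is a simple closed curve $\sigma\subset\mathbb{R}^2$, transverse to $K_D$ and meeting it in exactly two points, both on $C_j$ and away from crossings, bounding on one side a disk that contains $C_k$, the two $C_kC_j$-crossings, and a single arc of $C_j$ trivial in that disk, and nothing else of $K_D$. Granting this, $\sigma$ exhibits $K_D=K_{D'}\#(\text{Hopf link})$ with $K_{D'}$ a diagram of a non-split link (splitting $K_{D'}$ would split $K_D$) having $n-1$ components and $2(n-2)$ crossings; by induction $K_{D'}$ is a connected sum of $n-2$ Hopf links, so $K_D$ is a connected sum of $n-1$ Hopf links.

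The main obstacle is the geometric claim. Since $C_k$ is embedded it bounds a disk, and because no other component crosses $C_k$, each remaining component lies wholly inside or wholly outside that disk; one must combine this with the tree structure of $T$ (consecutive components along a path in $T$ cannot straddle an embedded circle they do not cross) to locate a complementary region of $C_k\cup C_j$ through which $\sigma$ may be pushed, taking care to pick the correct side of $C_k$ in $S^2$. This is a routine but somewhat delicate innermost-disk argument in planar topology, and it is the only place where the hypothesis ``non-split'' is used beyond reducing to the equality case.
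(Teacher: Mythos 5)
First, note that the paper offers no proof of this lemma: it is imported verbatim from Jin--Kim--Ko \cite{JKK} (their Lemmas 1 and 7), so there is no in-paper argument to measure yours against; I can only assess your proposal on its own terms. Your proof of the inequality $p\geq 2(n-r)$ is correct: distinct closed curves in the plane meet in an even number of transverse points, the adjacency graph of each connected split component is connected, and the resulting count of at least $2(k-1)$ inter-component crossings per split component with $k$ components sums to $2(n-r)$. The equality analysis (no self-crossings, the adjacency graph is a tree, each edge carries exactly two crossings) is also fine.

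The gap is in the induction step. Your key geometric claim is asserted for an \emph{arbitrarily chosen} leaf $C_k$ of the tree $T$, and in that generality it is false; moreover, ``picking the correct side of $C_k$ in $S^2$'' cannot repair it. Take $n=4$: let $C_j$ be a round circle and let $C_k, C_i, C_{i'}$ each clasp $C_j$ (so $T$ is a star centered at $C_j$ and $p=6$), drawn so that $C_i$ lies in the interior of the complementary disk $\Delta$ of $C_k$ containing the arc $a=C_j\cap\Delta$ and clasps $a$, while $C_{i'}$ clasps the complementary arc $a'$ on the other side of $C_k$. This is a legitimate diagram of a connected sum of three Hopf links, yet both complementary disks of $C_k$ contain a component other than $C_j$, and in fact no curve $\sigma$ as in your claim exists for this leaf: the cyclic order of the six crossings along $C_j$ is $k,i,i,k,i',i'$, so every arc of $C_j$ joining the two $C_kC_j$-crossings meets further crossings, whereas your $\sigma$ would require such an arc that is free of them. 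The missing ingredient is therefore a choice of \emph{leaf}, not of side --- one must select, by an innermost argument, a leaf whose two crossings are adjacent in the cyclic order of crossings along its neighbour --- and this selection is not routine bookkeeping but the actual content of the step. A second, smaller omission: even with the right $\sigma$, you never verify that the enclosed tangle is a Hopf clasp rather than a trivially split pair. If both crossings of $C_k$ over $C_j$ had the same sense, $C_k$ (which meets the rest of $K_D$ nowhere else) could be lifted off, splitting $K_D$; non-splitness rules this out, but this is a genuine second use of the hypothesis and must be stated for the summand to be a Hopf link.
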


Let $K_D$ be a diagram with $c(K)$ crossings of $K$.
By Lemma \ref{lem:link}, $c(K) \geq 2n-2$,
and if the equality holds, then $K$ is a connected sum of $n-1$ Hopf links.
When $K$ has 3 components, the left mosaic representative in Figure \ref{fig6}
shows $m(K) \leq c(K)+1$.
If $K$ has more components, then for each component
the crossing number is increased by 2 and we can easily find a mosaic representative
whose mosaic number is increased by 2 or less.

\begin{figure}[h]
\begin{center}
\includegraphics[scale=0.6]{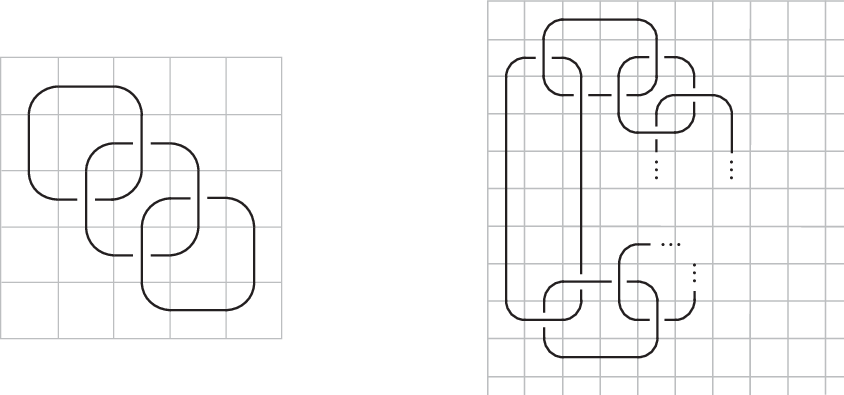}
\end{center}
\caption{All components are rectangles}
\label{fig6}
\end{figure}

If $c(K) > 2n$, then $m(K) \leq 2n \leq c(K)-1$, so we have done.
In the case of $c(K)=2n-1$ or $2n$, clearly $m(K) \leq c(K)+1$.
We only need to check the case where $K$ is prime and non-alternating.
If $c(K)=2n-1$, some component of $K_D$ must have a self-crossing.
We can nullify the crossing without increasing the number of components.
Then this new diagram is again a connected sum of $n-1$ Hopf links.
This guarantees that the original link $K$ is not prime.
Finally we may assume that $c(K)=2n$.
Each component does not have self-crossings.
Otherwise, there must be at least 2 self-crossings.
Then we can repeat the argument above to show that $K$ is not prime.
If the diagram $K_D$ has a component with only 2 crossing points,
then it must be a split link or a connected sum with the Hopf link, so a non-prime link.
Therefore each component has at least 4 crossing points.
But if one of them has more than 4 crossings, then $c(K) > 2n$.
Thus each component has exactly 4 crossings.
Since $K$ is non-splitting, this diagram looks like a necklace with $n$ rings.
So we can find a mosaic representative with $m(K) \leq 2n-2$ for $n \geq 4$
as the right in Figure \ref{fig6}.
Note that if $n=2$, then a grid diagram with 2 rectangles represents the unlink or the Hopf link.
And if $n=3$, only $6^3_3$ link is non-alternating.

\section{sharper upper bounds for several knot classes}

In this section we will present sharper upper bounds on the mosaic number of
pretzel knots and torus knots.

\begin{corollary} \label{cor:pretzel}
Let $P(-p,q,r)$ be a pretzel knot of type $(-p,q,r)$ with $p,q,r \geq 2$.
Then
\begin{itemize}
\item[(1)] $m(P(-2,q,r)) \leq q+r$ for $q,r \geq 3$;
\item[(2)] $m(P(-p,2,r)) \leq p+r+1$ for $p,r \geq 3$;
\item[(3)] $m(P(-p,3,r)) \leq p+r+1$ for $p,r \geq 3$;
\item[(4)] $m(P(-p,q,r)) \leq p+q+r-3$ for $p \geq 3$ and $q,r \geq 4$.
\end{itemize}
\end{corollary}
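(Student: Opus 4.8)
The plan is to bound the arc index (equivalently, the grid index) of each pretzel knot $P(-p,q,r)$ and then invoke the mosaic bounds obtained in the proof of Theorem~\ref{thm:main}, rather than working directly with mosaics. The key point is that in Case~1 of that proof we showed $m(K)\le\alpha(K)-1$ whenever a grid diagram of $K$ has a non-rectangular component, and this is certainly the situation for a nontrivial pretzel knot. So the corollary will follow once we have the arc index estimates $\alpha(P(-2,q,r))\le q+r+1$, $\alpha(P(-p,2,r))\le p+r+2$, $\alpha(P(-p,3,r))\le p+r+2$, and $\alpha(P(-p,q,r))\le p+q+r-2$ for $p\ge 3$, $q,r\ge 4$.

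First I would recall the known crossing numbers: a pretzel knot $P(-p,q,r)$ with $p,q,r\ge 2$ has a standard alternating-or-near-alternating diagram with $p+q+r$ crossings, and one checks which of these diagrams are non-alternating and prime. For $P(-2,q,r)$ with $q,r\ge 3$ the type-$(-2,q,r)$ pretzel is non-alternating and prime with crossing number $q+r+2$, so Theorem~\ref{thm:BP} and Theorem~\ref{thm:JP} give $\alpha\le q+r+2$, and feeding this into $m\le\alpha-1$ yields $m(P(-2,q,r))\le q+r+1$ — but part~(1) claims the stronger bound $q+r$. To get the extra reduction I would exhibit an explicit economical grid (arc) presentation of $P(-2,q,r)$ of arc index $q+r+1$ by hand: arrange the two half-twists of the $-2$ tangle so that they share columns with the long $q$- and $r$-twist regions, using cyclic permutations as in Proposition~\ref{prop:D} to line things up, and then apply the column-deletion step from Case~1. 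The analogous explicit constructions handle (2), (3) and (4): in each case one starts from the standard pretzel diagram, converts it to a grid diagram, reduces it via destabilizations and cyclic permutations to the claimed arc index, and then loses one more unit to the mosaic-reduction move. For part~(4), where all three parameters are at least $3$ (and $q,r\ge 4$), the pretzel is prime and non-alternating, so Theorem~\ref{thm:JP} already gives $\alpha(P(-p,q,r))\le p+q+r$; combined with a single application of Case~1 this only yields $m\le p+q+r-1$, so again two further units of savings must come from an explicit grid diagram of arc index $p+q+r-2$ together with the corner-smoothing/column-deletion step.

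The main obstacle I anticipate is the bookkeeping in the explicit grid constructions: one must simultaneously (i) verify that the proposed diagram really represents $P(-p,q,r)$ and not some other pretzel or its mirror, (ii) check that after the cyclic permutations the edge $v_s$ from the Case~1 argument genuinely lies between $v_l$ and $v_m$ so that the sliding-and-deletion move applies, and (iii) confirm the exact count of rows/columns matches the stated bound in each of the four parameter regimes, paying attention to the small-parameter boundary cases ($q=2$, $q=3$, etc.) where the twist region degenerates and the diagram may simplify or fail to be prime. I would organize this by drawing one template grid diagram per case, labelling the blocks of parallel strands by the twist parameters, and then arguing the reduction uniformly; the torus-knot discussion (if included alongside) would proceed in the same spirit, using the well-known arc index $\alpha(T(p,q))=p+q$ of torus knots together with the Case~1 reduction.
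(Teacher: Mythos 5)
Your reduction is exactly the one the paper uses: its entire proof is a single sentence combining the arc-index computations of Lee and Jin \cite{LJ} with Case~1 of the proof of Theorem~\ref{thm:main} (a grid diagram of a nontrivial knot has no rectangular component, so $m(K)\le\alpha(K)-1$). You have also correctly reverse-engineered the arc-index upper bounds that make the four stated inequalities come out: $\alpha(P(-2,q,r))\le q+r+1$, $\alpha(P(-p,2,r))\le p+r+2$, $\alpha(P(-p,3,r))\le p+r+2$, and $\alpha(P(-p,q,r))\le p+q+r-2$ in the respective parameter ranges, which are precisely the bounds supplied by \cite{LJ}. You are likewise right that the generic bounds of Theorems~\ref{thm:BP} and \ref{thm:JP} are too weak here and that explicit arc presentations are unavoidable.

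The gap is that those explicit grid diagrams carry all of the mathematical content of the corollary, and your proposal only promises them (``arrange the two half-twists so that they share columns with the long twist regions'') without constructing or verifying a single one. The paper outsources this step to a separate article \cite{LJ} devoted entirely to that computation; a self-contained proof along your lines would need, for each of the four cases, a template grid diagram with the block structure made explicit, a check that it really represents $P(-p,q,r)$, and a count of its rows confirming the claimed arc index, with the degenerate small-parameter cases you mention handled separately. Until that is done, the four inequalities are not established. A minor additional point: your assertion that $c(P(-2,q,r))=q+r+2$ is itself a nontrivial fact for a non-alternating diagram and would need a source, though it is harmless here since you only use it to derive bounds you then discard as insufficient.
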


\begin{proof}
This corollary follows directly from the result of \cite{LJ} combined with the proof of the main theorem.
\end{proof}

\begin{theorem} \label{thm:torus}
Let $T_{p,q}$ be a $(p,q)$-torus knot.
Then $m(T_{p,q}) \leq p+q-1$.
Moreover if $|p-q| \neq 1$, then $m(T_{p,q}) \leq p+q-2$.
\end{theorem}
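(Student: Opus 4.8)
The plan is to invoke the main theorem (Theorem \ref{thm:main}) together with the well-known crossing number of torus knots, and then upgrade the bound by a careful examination of the grid diagram in the spirit of Case 1 in the proof of Theorem \ref{thm:main}. Recall that for a $(p,q)$-torus knot $T_{p,q}$ with $2 \le p \le q$ the crossing number is $c(T_{p,q}) = \min\{p(q-1),\, q(p-1)\} = q(p-1)$, but this is far larger than $p+q$, so a naive application of Theorem \ref{thm:main} is useless. Instead, the key is that $T_{p,q}$ admits an especially efficient grid diagram: its arc index is known to be $\alpha(T_{p,q}) = p+q$ (this is the standard $p \times q$ ``staircase'' grid diagram). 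So I would first record $\alpha(T_{p,q}) = p+q$, giving immediately $m(T_{p,q}) \le \alpha(T_{p,q}) = p+q$ by the observation (used at the start of Section 3) that a grid diagram is a mosaic representative after corner smoothing.

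To gain the first unit, I would show that the minimal grid diagram of $T_{p,q}$ always contains a non-rectangular component — indeed $T_{p,q}$ is a nontrivial knot, hence its $(p+q)$-grid diagram cannot consist solely of rectangles (a union of rectangles represents an unlink). Therefore Case 1 of the proof of Theorem \ref{thm:main} applies verbatim: by cyclic permutations and the slide-and-delete-a-column argument, one reduces to an $(\alpha(T_{p,q})-1)$-mosaic, yielding $m(T_{p,q}) \le p+q-1$. This establishes the first assertion.

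For the second assertion, when $|p-q| \neq 1$ I would look for a second column (or row) of the grid diagram that can be eliminated. The natural idea is to exploit extra slack in the staircase diagram: when $p$ and $q$ differ by more than $1$, the ``steps'' of the staircase are large enough that, after an appropriate sequence of cyclic permutations, there are two short vertical edges sitting in positions where the slide-down move of Case 1 can be performed twice, deleting two columns rather than one. Concretely, I would identify two disjoint local configurations of the type (top horizontal edge $h_t$ with adjacent unequal vertical edges $v_l, v_s$) appearing in the standard torus grid, carry out the planar-isotopy/Reidemeister slide for each independently so that no two deleted columns coincide, and conclude $m(T_{p,q}) \le p+q-2$. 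The excluded case $|p-q|=1$ is exactly where the staircase is ``tight'' and only one such reducible column exists, matching the first bound.

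The main obstacle will be the second step: verifying that two column-deletions can genuinely be carried out simultaneously without the reductions interfering — that is, that the two chosen short vertical edges can, by cyclic permutations, be placed so that sliding the corresponding top edges down does not destroy the configuration needed for the other slide, and that the two deleted columns are distinct. This requires a careful bookkeeping of the grid after the cyclic permutations; the condition $|p-q| \neq 1$ is precisely what guarantees enough room. Everything else — the value of $\alpha(T_{p,q})$, the presence of a non-rectangular component, and the single-column reduction — is either standard or a direct quotation of the machinery already built in Section 3.
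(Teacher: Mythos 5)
Your first assertion, $m(T_{p,q}) \leq p+q-1$, is argued essentially as in the paper: exhibit the standard $(p+q)$-grid diagram of $T_{p,q}$, observe that the single component of a nontrivial knot cannot be a rectangle so Case 1 of the main theorem applies, and perform the slide-and-delete reduction once. (A small economy: you do not need the equality $\alpha(T_{p,q})=p+q$, only the existence of the $(p+q)$-grid diagram, which the paper simply draws in Figure~\ref{fig7}.) That half is fine.

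The second assertion is where there is a genuine gap. You propose to run the Case 1 reduction twice in the same direction --- two top-edge slides deleting two columns --- and you explicitly defer the decisive verification (that two non-interfering configurations exist exactly when $|p-q| \neq 1$) to ``careful bookkeeping'' that you never carry out; but that verification is the entire content of the second claim. Moreover, the plan as stated faces two concrete obstacles: after the first slide the object is no longer a grid diagram (the paper notes this in Case 1), so you cannot reapply cyclic permutations to stage a second slide, and only one horizontal edge is on top at a time, so you must argue that the second configuration survives the first move. The paper sidesteps both issues by making the two reductions transverse rather than parallel: in the explicit staircase diagram of Figure~\ref{fig7} it slides the top horizontal edge down \emph{and} the rightmost vertical edge across simultaneously, and the hypothesis $p+2 \leq q$ enters only to check that the $(p+1)^{\textrm{th}}$ vertical edge and the $(p+1)^{\textrm{th}}$ horizontal edge do not share endpoints, so the two slides cannot interfere (Figure~\ref{fig8}). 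To complete your proof you should either adopt this horizontal-plus-vertical pair of slides, or else concretely exhibit two disjoint configurations $(h_t,v_l,v_s,h_s,v_m)$ in the staircase diagram and verify that the deleted rows and columns are distinct and that neither slide destroys the data needed for the other.
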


\begin{proof}
Figure \ref{fig7} shows a grid diagram of $T_{p,q}$ with $p<q$ with grid index $p+q$.
Now we can follow the proof of the main theorem to reduce the size by 1.
Moreover if $p+2 \leq q$,
then $(p+1)^{th}$ vertical edge and $(p+1)^{th}$ horizontal edge do not share their endpoints.
Therefore we can simultaneously apply two sliding moves on the top horizontal edge
and on the rightmost vertical edge as in Figure \ref{fig8}.
\end{proof}

\begin{figure}[h]
\begin{center}
\includegraphics[scale=0.65]{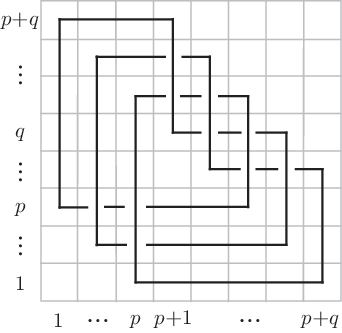}
\end{center}
\caption{Grid diagram of $T_{p,q}$ with grid index $p+q$}
\label{fig7}
\end{figure}

\begin{figure}[h]
\begin{center}
\includegraphics[scale=0.55]{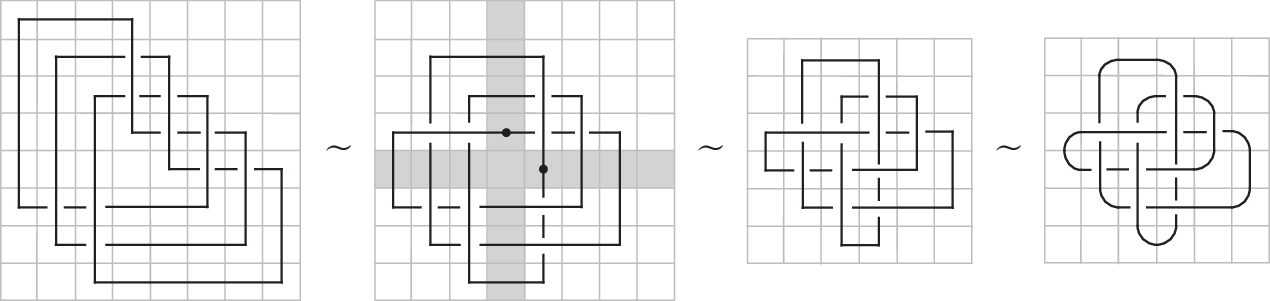}
\end{center}
\caption{Converting a grid diagram of $T_{3,5}$ into a knot 6-mosaic}
\label{fig8}
\end{figure}


\begin{thebibliography}{AA}
\bibitem{BP} Y. Bae and C. Y. Park,
    {\em An upper bound of arc index of links},
    Math. Proc. Camb. Phil. Soc. \textbf{129} (2000) 491--500.
\bibitem{C} P. R. Cromwell,
   {\em Embedding knots and links in an open book I: Basic properties},
    Topology Appl. \textbf{64} (1995) 37--58.
\bibitem{D} I. Dynnikov,
    {\em Arc-presentations of links: monotonic simplification},
    Fund. Math. \textbf{190} (2006) 29--76.
\bibitem{HLLO1} K. Hong, H. J. Lee, H. Lee and S. Oh,
    {\em Upper bound on the total number of knot $n$-mosaics},
    J. Knot Theory Ramifications (in the press).
\bibitem{HLLO2} K. Hong, H. Lee, H. J. Lee and S. Oh,
    {\em Small knot mosaics and partition matrices},
    J. Phys. A: Math. Theor. \textbf{47} (2014) 435201.
\bibitem{JKK} G. T. Jin, B. K. Kim and K. H. Ko,
    {\em Adequate links and the Jones polynomial of unlinks},
    Knots 90. (1992) 185--193.
\bibitem{JP} G. T. Jin and W. K. Park,
    {\em Prime knots with arc index up to 11 and an upper bound of arc index for non-alternating knots},
    J. Knot Theory Ramifications \textbf{19} (2010) 1655--1672.
\bibitem{K} L. Kauffman,
    {\em  Quantum computing and the Jones polynomial},
    in Quantum Computation and Information, AMS CONM \textbf{305} (2002) 101--137.
\bibitem{KS} T. Kuriya and O. Shehab,
    {\em The Lomonaco-Kauffman conjecture},
    J. Knot Theory Ramifications \textbf{23} (2014) 1450003.
\bibitem{LJ} H. J. Lee and G. T. Jin,
    {\em Arc index of pretzel knots of type (-p,q,r)},
    Proc. Amer. Math. Soc. series B (in the press).
\bibitem{L} S. Lomonaco,
    {\em Quantum Computation},
    Proc. Symposia Appl. Math. \textbf{58} (2002) 358 pp.
\bibitem{LK1} S. Lomonaco and L. Kauffman,
    {\em Quantum knots},
    in Quantum Information and Computation II, Proc. SPIE (2004) 268--284.
\bibitem{LK2} S. Lomonaco and L. Kauffman,
    {\em Quantum knots and mosaics},
    Quantum Inf. Process. \textbf{7} (2008) 85--115.
\bibitem{LK3} S. Lomonaco and L. Kauffman,
    {\em Quantum knots and lattices, or a blueprint for quantum systems that do rope tricks},
    Proc. Symposia Appl. Math. \textbf{68} (2010) 209--276.
\bibitem{LK4} S. Lomonaco and L. Kauffman,
    {\em Quantizing knots and beyond},
    in Quantum Information and Computation IX, Proc. SPIE \textbf{8057} (2011) 1--14.
\bibitem{Oh} S. Oh,  
    {\em Quantum knot mosaics and the growth constant},
    Preprint.
\bibitem{OHLL} S. Oh, K. Hong, H. Lee and H. J. Lee,
    {\em Quantum knots and the number of knot mosaics},
    Preprint.
\end{thebibliography}
\end{document}